\documentclass{article}
\usepackage{amsmath,latexsym}
\usepackage{enumerate}
\usepackage{amsthm} 

\newcommand\RR{\rm {I \! R}}
\newcommand\ZZ{\mathbf{Z}}
\newcommand\NN{\rm {I \! N}}
\newcommand\FF{{\cal F}}

\DeclareMathOperator\sP{P}   
\newcommand{\rP}{\mathrm{P}} 
\DeclareMathOperator\sE{E}   
\newcommand{\rE}{\mathrm{E}}

\newcommand{\eps}{\varepsilon}



\numberwithin{equation}{section}
\theoremstyle{plain}
\newtheorem{theorem}{Theorem}[section]

\newtheorem{corollary}{Corollary}[section]
\newtheorem{example}{Example}[section]
\newtheorem{lemma}{Lemma}[section]



\newcommand\address{Address correspondence to P. C. Allaart, Department of Mathematics, University of North Texas,
1155 Union Circle \#311430, Denton, TX 76203-5017, USA; E-mail: allaart@unt.edu}
\newcommand\thankyou{Supported in part by Japanese GCOE Program G08: ``Fostering Top Leaders in Mathematics --- Broadening the Core and Exploring New Ground".} 

\title{A general ``bang-bang" principle for predicting the maximum of a random walk\footnote{\thankyou}}
\author{Pieter C. Allaart \footnote{\address}}

\begin{document}

\maketitle

\begin{abstract}
Let $(B_t)_{0\leq t\leq T}$ be either a Bernoulli random walk or a Brownian motion with drift, and let $M_t:=\max\{B_s: 0\leq s\leq t\}$, $0\leq t\leq T$. This paper solves the general optimal prediction problem
\begin{equation*}
\sup_{0\leq\tau\leq T}\sE[f(M_T-B_\tau)],
\end{equation*}
where the supremum is over all stopping times $\tau$ adapted to the natural filtration of $(B_t)$, and $f$ is a nonincreasing convex function. The optimal stopping time $\tau^*$ is shown to be of ``bang-bang" type: $\tau^*\equiv 0$ if the drift of the underlying process $(B_t)$ is negative, and $\tau^*\equiv T$ is the drift is positive. This result generalizes recent findings by S.~Yam, S.~Yung and W.~Zhou [{\em J. Appl. Probab.} {\bf 46} (2009), 651--668] and J.~Du~Toit and G.~Peskir [{\em Ann. Appl. Probab.} {\bf 19} (2009), 983--1014], and provides additional mathematical justification for the dictum in finance that one should sell bad stocks immediately, but keep good ones as long as possible.

\bigskip
{\it AMS 2000 subject classification}. Primary 60G40, 60G50, 60J65; secondary 60G25.

\bigskip
{\it Key words and phrases}: Bernoulli random walk, Brownian motion, optimal prediction, ultimate maximum, stopping time, convex function.

\end{abstract}

\section{Introduction and main results}

A number of recent papers (e.g. \cite{DuToit,SXZ,YYZ}) have discussed the problem of stopping a random walk, or a Brownian motion, ``as close as possible" to its ultimate maximum. An important motivation in these papers was the financial problem of selling a stock at a price ``close" to the highest price over a given finite time interval, when the stock price follows a discrete binomial model (in \cite{YYZ}) or a geometric Brownian motion (in \cite{DuToit,SXZ}). In these three papers, ``closeness" was measured by the ratio of the stopped price to the ultimate maximum price, and the striking result was that the optimal strategy is of ``bang-bang" type, meaning that it is either optimal to stop at time zero, or to stop at the time horizon, depending on the quality of the stock. These results, as pointed out by the papers' authors, reinforce the widely held financial view that one should sell bad stocks quickly, but keep good ones as long as possible.

The purpose of the present paper is to provide an important generalization of the results in \cite{DuToit,YYZ}. Rather than considering price ratios, we take as the basic process either a ``flat" Bernoulli random walk or a Brownian motion with drift, and measure closeness by a general nonincreasing, convex function $f$ of the positive distance from the stopped value of the process to its eventual maximum. For this more general problem we obtain the same result, namely that it is either optimal to stop at time zero or at the time horizon, depending on the drift of the underlying process. For the specific function $f(x)=e^{-\sigma x}$, where $\sigma>0$, our results reduce to those of \cite{DuToit} and \cite{YYZ}. The proofs involve only a minimum of technicalities, and bring to the foreground the essential feature hidden within the arguments in the aforementioned papers, namely convexity of the function $f$.

The remainder of this section is devoted to a precise formulation of the problem and statements of the main results, which are nontechnical in nature. First, let $\{S_n\}_{n=0,1,\dots}$ be a Bernoulli random walk with  parameter $p\in(0,1)$. That is, $S_0\equiv 0$, and for $n\geq 1$, $S_n=X_1+\dots+X_n$, where $X_1,X_2,\dots$ are independent, identically distributed random variables with $\rP(X_1=1)=p$, and $\rP(X_1=-1)=q:=1-p$. Let a finite time horizon $N\in\NN$ be given.
Let $f:\{0,1,\dots,N\}\to\RR$ be nonincreasing, and consider the optimal stopping problem
\begin{equation}
\sup_{0\leq\tau\leq N}\rE[f(M_N-S_\tau)],
\label{eq:objective}
\end{equation}
where $M_N:=\max\{S_0,S_1,\dots,S_N\}$, and the supremum is over the set of all stopping times $\tau\leq N$ adapted to the natural filtration $\{\FF_k\}_{0\leq k\leq N}$ of the process $\{S_k\}_{0\leq k\leq N}$.

As a concrete example, taking $f(0)=1$ and $f(k)=0$ for $k\geq 1$ turns the expectation in \eqref{eq:objective} into the probability $\rP(S_\tau=M_N)$, so that \eqref{eq:objective} becomes a ``best-choice" or ``secretary" problem for the random walk, where the goal is to maximize the probability of stopping at the ultimate maximum of the walk; see \cite{Hlynka}, where this problem is solved in a somewhat more general setting for the case $p=1/2$. Yam et al. \cite{YYZ} solved the problem for arbitrary $p$, and showed the (unique) optimal rule to be $\tau\equiv 0$ when $p<1/2$, and $\tau\equiv n$ when $p>1/2$. When $p=1/2$, it is optimal to stop at time $0$, or at time $N$, or at any time at which the walk is at its running maximum. (A similar problem, where the objective is to stop a Brownian motion within a distance $\eps>0$ from its ultimate maximum, was considered in \cite{Pedersen}.)

Yam et al. \cite{YYZ} also treated the case $f(k)=d^k$, where $0<d<1$ is a constant. They showed that for this quite different objective function, the optimal rule is nonetheless exactly the same as for the problem of maximizing the probability of stopping at the maximum.

This leads one to believe that there must be some general principle at work.
Notice that in each of the above examples, $f$ is in fact convex. The first aim of this note is to show that the optimal rule is of the above simple form for any nonincreasing convex objective function $f$, thereby generalizing the results of \cite{YYZ}. Recall that a function $f:\{0,1,\dots,N\}\to\RR$ is {\em convex} if $f(k)-2f(k+1)+f(k+2)\geq 0$ for all $k$ with $0\leq k\leq N-2$, and is {\em strictly convex} if the inequality is strict for all such $k$.

\begin{theorem} \label{thm:optimal-rules}
Let $f:\{0,1,\dots,N\}\to\RR$ be nonincreasing and convex, and consider the optimal stopping problem \eqref{eq:objective}.

(i) If $p\leq 1/2$, the rule $\tau\equiv 0$ is optimal.

(ii) If $p\geq 1/2$, the rule $\tau\equiv N$ is optimal.

(iii) If $p=1/2$, any stopping time $\tau$ satisfying $S_\tau=M_\tau$ or $\tau=N$ almost surely is optimal.
\end{theorem}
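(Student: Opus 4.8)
The plan is to collapse the two-dimensional state $(S_k,M_k)$ to the one-dimensional drawdown (gap) process and then to isolate the single place where convexity is needed. Writing $G_k:=M_k-S_k\ge 0$, the strong Markov property and spatial homogeneity give, conditionally on $\FF_\tau$, that the post-$\tau$ rise is an independent copy of the running maximum $M'_{N-\tau}$ of a fresh walk, so $M_N-S_\tau=G_\tau\vee M'_{N-\tau}$. Hence, with
\[
\phi(g,m):=\rE\!\left[f\!\left(g\vee M'_m\right)\right],
\]
the objective becomes $\rE[f(M_N-S_\tau)]=\rE[\phi(G_\tau,N-\tau)]$, a stopping problem for the Markov chain $G$ whose law is $G_{k+1}=(G_k-1)^+$ with probability $p$ and $G_{k+1}=G_k+1$ with probability $q$. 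I would then write down the Bellman equation for the value function $V(g,m)$ and argue that for $p\ge 1/2$ it is always (weakly) optimal to continue, while for $p\le 1/2$ it is optimal to stop at the gap-$0$ states. Part (iii) should then follow by combining the two borderline cases together with the strong Markov renewal occurring each time $G$ returns to $0$.

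The case $p\ge 1/2$ is equivalent to showing that the run-to-the-end value $R(g,m):=\rE[f(G^{(g)}_m)]$, which is harmonic in the sense $R(g,m)=pR((g-1)^+,m-1)+qR(g+1,m-1)$, dominates the stop-now payoff, i.e. $R\ge\phi$; since $R$ is then a harmonic majorant of $\phi$ and is itself attainable (by $\tau\equiv N$), it must coincide with $V$. The case $p\le 1/2$ is the mirror statement that stopping is optimal at gap $0$. The whole role of convexity should be the elementary inequality that, for $p\ge 1/2$ and any nonincreasing convex $\psi$,
\[
p\,\psi(g-1)+q\,\psi(g+1)\ge\psi(g),
\]
which holds because one step moves the argument with mean $g+(q-p)\le g$ (and $\psi$ is nonincreasing) while the spread is weighted favorably by convexity. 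Equivalently, a nonincreasing convex $f$ admits a representation $f(x)=\alpha+\sum_a c_a\,(a-x)^+$ with all $c_a\ge 0$, so by linearity it suffices to treat the put-type payoffs $f_a(x)=(a-x)^+$; here convexity is spent exactly once, to secure the nonnegative weights.

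The hard part is that the local, one-step comparison is \emph{not} sign-definite, so convexity cannot be applied pointwise. Indeed the stop-now value process $\phi(G_k,N-k)$ is neither a sub- nor a supermartingale: taking $f=\mathbf 1_{\{0\}}$ (nonincreasing and convex) and the state $g=0$ with two steps left, its conditional drift equals $-q^2<0$ even though $p>1/2$. Consequently the comparison must be made against the fully continued value rather than against stopping one step later, and the crux becomes the global convex-order statement $R(g,m)\ge\phi(g,m)$, that is $\rE[f(G^{(g)}_m)]\ge\rE[f(g\vee M'_m)]$. I would stress that ordinary stochastic dominance between $G^{(g)}_m$ and $g\vee M'_m$ fails in both directions already at $g=m=1$ (there $g\vee M'_1\equiv 1$ while $G^{(1)}_1\in\{0,2\}$), so genuine convexity is indispensable and monotonicity alone cannot suffice.

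I therefore expect the main effort to lie in establishing this convex-order comparison, and I would attempt it in one of two ways. The first is a backward induction on the remaining time $m$ carrying a strengthened hypothesis that controls the increments $R(g,m)-R(g+1,m)$ finely enough to absorb the negative one-step drifts coming from the small-gap states; concretely, in the recursion $R(g,m)-\phi(g,m)=D(g,m)+p[R-\phi]((g-1)^+,m-1)+q[R-\phi](g+1,m-1)$ the negative source $D(g,m)$ at a gap-$0$ state must be dominated by the accumulated surplus $R-\phi$ carried up from the larger-gap successors, and the induction has to quantify this. The second route is to prove the statement separately for each put-payoff $(a-x)^+$ — where the capped structure makes the relevant distributions more explicit, e.g. via the reflection principle for $M'_m$ — and then reassemble the general case through the nonnegative representation of $f$. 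Either way, the asymmetry between $p\ge 1/2$ (which needs the global comparison $R\ge\phi$) and $p\le 1/2$ (which only concerns the gap-$0$ states) will need to be handled on its own terms rather than by a formal symmetry.
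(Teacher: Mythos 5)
There is a genuine gap, and you have located it yourself: everything hinges on the convex-order comparison $R(g,m)\geq\phi(g,m)$, and your proposal does not prove it. Your reduction is exactly the paper's --- your $\phi(g,m)$ is its $G(k,i)=\sE[f(i\vee M_k^p)]$ and your $R(g,m)$ is its $\tilde{D}(k,i)=\sE[f(i\vee M_k^p-S_k^p)]$, and your observations that the stop-now value fails to be a sub/supermartingale and that plain stochastic dominance fails in both directions are correct and well taken. But both of your proposed routes to $R\geq\phi$ are left unexecuted, and the first founders on precisely the difficulty you concede: the negative source term at gap-$0$ states must be absorbed by an ``accumulated surplus'' that your induction hypothesis does not quantify. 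The paper (Lemma \ref{lem:key-inequality} and Corollary \ref{cor:consequence}) closes this by a mechanism absent from your sketch: time reversal. By $(M_n^p-S_n^p,S_n^p)\stackrel{d}{=}(M_n^q,-S_n^q)$, the difference $\sE[f(i\vee M_n^p-S_n^p)]-\sE[f(i\vee M_n^q)]$ splits into contributions from paths ending at level $l>0$ and at $-l$; pairing these via the reflection identity and using the single-path comparison $\sP(M_n^p=k,S_n^p=l)\geq\sP(M_n^q=k,S_n^q=l)$ for $l>0$, the difference is bounded below by a sum of terms $\psi(i,k,l)=\big[f(i\vee k-l)-f\big(i\vee(k-l)\big)\big]+\big[f\big(i\vee(k-l)+l\big)-f(i\vee k)\big]$, each nonnegative by convexity. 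So your instinct that convexity is ``spent exactly once'' is vindicated, but it is spent inside a global path-pairing argument, not inside a Bellman recursion; neither your strengthened-induction route nor your put-decomposition route (which would still need this pairing, or an equivalent, for each payoff $(a-x)^+$) is carried to completion.

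A second, smaller inaccuracy: your claim that the case $p\leq 1/2$ ``only concerns the gap-$0$ states'' is not right. To run your DP scheme there you would need $\phi$ superharmonic at \emph{every} state $(g,m)$, and a one-step termwise comparison shows the $p$-term goes the wrong way at $g>0$, so this again requires a global inequality rather than a local one. The paper instead uses the same key inequality applied to the dual walk: by $M_N^p\stackrel{d}{=}M_N^q-S_N^q$ and the Markov property of the dual gap process, $\sE[f(M_N^p)]=\sE[D(N-\tau,Z_\tau^q)]$ with $D(k,i)=\sE[f(i\vee M_k^q-S_k^q)]$, then $D\geq G$ (the $q\geq 1/2$ case of Lemma \ref{lem:key-inequality}) together with the coupling \eqref{eq:ordering} (which gives $Z_\tau^q\leq Z_\tau^p$, hence $G(N-\tau,Z_\tau^p)\leq G(N-\tau,Z_\tau^q)$) closes the argument for \emph{all} stopping times at once. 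Your part (iii) plan is salvageable, since it reduces to the identities $G(0,i)=D(0,i)$ and $G(k,0)=D(k,0)$ (again time reversal) once parts (i) and (ii) are in place --- but as it stands the proposal is a correct problem analysis with the central lemma missing.
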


Thus, in the words of Du Toit and Peskir \cite{DuToit}, the optimal strategy $\tau^*$ is of ``bang-bang" type: $\tau^*\equiv 0$ if $p<1/2$, and $\tau^*\equiv N$ if $p>1/2$.

Convexity of $f$ is essential, as the following example shows.

\begin{example}
{\rm
Let $f(0)=f(1)=1$, and $f(k)=0$ for $k\geq 2$. Thus, there are two possible outcomes, ``winning" and ``losing", and we win if we stop with one of the two highest values of the walk. Let $N=2$. It is easy to see that, regardless of $p$, the rule $\tau\equiv 1$ gives a winning probability of $1$. On the other hand, the winning probability for the rule $\tau\equiv 0$ is $1-p^2$, and that for the rule $\tau\equiv 2$ is $1-q^2$.
}
\end{example}

One might ask when the optimal rules in Theorem \ref{thm:optimal-rules} are unique. The next theorem gives simple sufficient conditions to this effect.

\begin{theorem} \label{thm:unique}
Let $f$ be as in Theorem \ref{thm:optimal-rules}.

(i) If $p<1/2$ and $f$ is nonconstant, then the rule $\tau\equiv 0$ is the unique optimal rule.

(ii) If $p>1/2$ and $f$ is strictly decreasing, then the rule $\tau\equiv N$ is the unique optimal rule.

(iii) If $p=1/2$ and $f$ is strictly convex, then the only optimal rules are those that satisfy $S_\tau=M_\tau$ or $\tau=N$ almost surely.
\end{theorem}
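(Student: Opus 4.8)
The plan is to read Theorem \ref{thm:unique} off as the equality analysis of Theorem \ref{thm:optimal-rules}, after reducing the general convex $f$ to its extremal ``truncated'' pieces. Put $g(k):=f(k)-f(k+1)\ge 0$ for $0\le k\le N-1$; convexity of $f$ is exactly the statement $g(0)\ge g(1)\ge\cdots\ge g(N-1)\ge 0$, and $f$ nonconstant is equivalent to $g(0)>0$. With $w_j:=g(j-1)$ and
\[
A_J(\tau):=\rE[\min(M_N-S_\tau,J)]-\rE[\min(M_N,J)],\qquad 1\le J\le N,
\]
a summation by parts, using $f(0)-f(y)=\sum_{i=0}^{y-1}g(i)$ and $\rE[\min(X,J)]=\sum_{j=1}^J\rP(X\ge j)$, gives
\[
\rE[f(M_N)]-\rE[f(M_N-S_\tau)]=\sum_{J=1}^{N-1}(w_J-w_{J+1})\,A_J(\tau)+w_N\,A_N(\tau).
\]
Since $M_N-S_\tau$ takes values in $\{0,\dots,N\}$ and $x\mapsto-\min(x,J)$ is nonincreasing and convex, Theorem \ref{thm:optimal-rules}(i) applied to this function yields $A_J(\tau)\ge 0$ for all $J$ when $p\le 1/2$; with $w_J-w_{J+1}\ge 0$ this recovers Theorem \ref{thm:optimal-rules}(i) and, crucially, shows that $\tau$ is optimal \emph{if and only if every summand above vanishes}. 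For part (ii) I would run the same identity with $M_N$ replaced by $M_N-S_N$, comparing against $\tau\equiv N$ and using $A^N_J(\tau)\ge 0$ for $p\ge 1/2$ from Theorem \ref{thm:optimal-rules}(ii).

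The two extreme values of $J$ are explicit. As $M_N-S_\tau\le N$ the truncation at $J=N$ is inactive, so Wald's identity gives $A_N(\tau)=-\rE[S_\tau]=-(2p-1)\rE[\tau]$, and the $\tau\equiv N$ version gives $(2p-1)\rE[N-\tau]$. This settles part (ii): if $f$ is strictly decreasing then $w_N=g(N-1)>0$, so optimality forces the vanishing of the Wald term, hence $(2p-1)\rE[N-\tau]=0$, i.e. $\tau\equiv N$ (as $p>1/2$). For part (i) the hypothesis ``nonconstant'' gives $w_1=g(0)>0$. If in addition $w_N>0$, the Wald term again forces $\rE[\tau]=0$, so $\tau\equiv 0$. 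The residual case $w_N=0$ is harder: setting $J^*:=\max\{J:w_J>0\}\le N-1$, the gap $w_{J^*}-w_{J^*+1}=w_{J^*}$ is positive, so optimality forces the single equation $A_{J^*}(\tau)=0$, and I must deduce $\tau\equiv 0$ from it — a strict monotonicity statement for the truncated problem at level $J^*$, generalizing the secretary-case ($J^*=1$) uniqueness of \cite{YYZ}.

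Part (iii) is where the reduction pays off. For $p=1/2$ the Wald term vanishes identically ($\rE[S_\tau]=0$, so $A_N(\tau)=0$), while strict convexity makes \emph{every} gap $w_J-w_{J+1}$, $1\le J\le N-1$, strictly positive. Hence $\tau$ is optimal precisely when $A_J(\tau)=0$ for all $J=1,\dots,N$, i.e. $\rE[\min(M_N-S_\tau,J)]=\rE[\min(M_N,J)]$ for every $J$. Because the functions $\{\min(\cdot,J)\}_{J=0}^N$ span all functions on $\{0,\dots,N\}$ (successive differences give the indicators $\mathbf 1[\,\cdot\ge J]$), matching all these expectations is equivalent to the distributional identity $M_N-S_\tau\stackrel{d}{=}M_N$. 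Thus (iii) reduces to: for the symmetric walk, $M_N-S_\tau\stackrel{d}{=}M_N$ \emph{iff} $S_\tau=M_\tau$ or $\tau=N$ almost surely. The ``if'' direction is immediate, since Theorem \ref{thm:optimal-rules}(iii) makes such $\tau$ optimal for all convex nonincreasing $f$, in particular all truncations.

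The main obstacle, shared by the leftover case of (i) and the ``only if'' direction of (iii), is the \emph{strict} form of the building-block inequalities $A_J(\tau)\ge 0$, equivalently a description of the stopping times attaining equality. I expect this to be genuinely delicate, because the one-step compensator of $k\mapsto\rE[f(M_N-S_k)\mid\FF_k]$ is \emph{not} of one sign: deferring a stop can locally raise or lower the conditional payoff, depending on the current drawdown $M_k-S_k$ relative to the remaining horizon $N-k$. So strictness cannot be extracted from a naive step-by-step exchange; I would instead aim at a global characterization — for (i), a coupling/refinement of the argument of \cite{YYZ} showing the truncated value strictly drops whenever $\rE[\tau]>0$, and for (iii), a proof that $M_N-S_\tau\stackrel{d}{=}M_N$ forces all stops onto running maxima, most plausibly by matching a second (or higher) moment, or by exploiting the reflection symmetry of the symmetric walk to detect any probability mass that is stopped at strictly positive drawdown before time $N$. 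Pinning down that these local considerations integrate to the required \emph{strict} global inequality, with no spurious cancellation, is the step I would need to carry out most carefully.
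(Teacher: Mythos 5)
Your summation-by-parts decomposition is correct, and it gives a complete, genuinely different proof of part (ii): writing the gap against $\tau\equiv N$ as $\sum_{J=1}^{N-1}(w_J-w_{J+1})A^N_J(\tau)+w_N A^N_N(\tau)$ with all weights and all $A^N_J(\tau)$ nonnegative (the latter by Theorem \ref{thm:optimal-rules}(ii) applied to the nonincreasing convex truncations $-\min(\cdot,J)$), and then killing the $J=N$ term by Wald, $A^N_N(\tau)=(2p-1)\,\rE[N-\tau]$, is self-contained once Theorem \ref{thm:optimal-rules} is granted. (The bound $M_N-S_\tau\leq N$ needed to deactivate the top truncation does hold, since $M_\tau-S_\tau\leq\tau$ and the post-$\tau$ rise is at most $N-\tau$.) This is arguably cleaner than the paper's route for (ii), which instead derives strict inequality in \eqref{eq:key-inequality} from Lemma \ref{lem:key-inequality}(ii) (via the all-up path $k=l=n$ and $\rP(M_n^p=S_n^p=n)>\rP(M_n^q=S_n^q=n)$) and feeds it through Corollary \ref{cor:consequence}.

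However, parts (i) and (iii) have genuine gaps, which you yourself flag. For (i), when $w_N=0$ (e.g.\ the secretary payoff $f=\mathbf{1}_{\{0\}}$, the motivating example), your reduction says optimality forces $A_{J^*}(\tau)=0$ and you ``must deduce $\tau\equiv 0$ from it'' --- but that deduction is exactly the uniqueness assertion of part (i) for the particular nonconstant function $-\min(\cdot,J^*)$, so the reduction is circular in precisely the residual case: the truncations are never strictly decreasing up to $N$ (for $J^*<N$), so the Wald term cannot rescue them either. The paper closes this with an idea absent from your proposal: the monotone coupling of the $p$- and $q$-walks through common uniforms, giving $\rP(Z_\tau^q=0,\,Z_\tau^p>0)\geq(q-p)^N>0$ for \emph{any} random time $\tau>0$, which combined with $G(k,0)>G(k,i)$ (from $f(0)>f(i)$) yields strict inequality at the step $\rE[G(N-\tau,Z_\tau^p)]\leq\rE[G(N-\tau,Z_\tau^q)]$ in \eqref{eq:G-inequality} --- note the strictness is placed in the $p$-versus-$q$ comparison, not in the $D\geq G$ comparison your truncations control. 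For (iii), your reduction to the distributional identity $M_N-S_\tau\stackrel{d}{=}M_N$ is correct (strict convexity makes all gaps $w_J-w_{J+1}$ positive, and the truncations determine the law), but the hard ``only if'' direction --- that this identity forces $S_\tau=M_\tau$ or $\tau=N$ a.s.\ for the symmetric walk --- is left as a plan (``matching a second moment\dots reflection symmetry''), not a proof; moment-matching in particular cannot be run through Theorem \ref{thm:optimal-rules} since the relevant test functions are not nonincreasing. The paper proves exactly this missing strictness in Lemma \ref{lem:key-inequality}(iii): in the reflection-paired sum $\Sigma^++\Sigma^-$, the single path $M_n=S_n=n$ contributes $\psi(i,n,n)=f\big((i-n)^+\big)-f(i)+f(i+n)-f(i\vee n)>0$ by strict convexity, giving $\rE[D(N-\tau,Z_\tau)]>\rE[G(N-\tau,Z_\tau)]$ whenever $\rP(Z_\tau>0,\ \tau<N)>0$. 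So your framework reproduces the ``soft'' halves of the theorem but defers its two essential strictness mechanisms, and filling them would in effect require the paper's coupling argument and key lemma (or equivalents).
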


It is left to the interested reader to verify that the above conditions can not be substantially weakened.

\bigskip
Next, let $B:=(B_t)_{t\geq 0}$ be a standard Brownian motion, and $\lambda$ a real parameter. Then the process ${(B_t^\lambda)}_{t\geq 0}$ defined by $B_t^\lambda:=B_t+\lambda t$ is a Brownian motion with drift $\lambda$. Let $M_t^\lambda:=\max\{B_s^\lambda: 0\leq s\leq t\}$. Once again we are interested in finding a stopping time $\tau$ (with respect to the natural filtration ${(\FF_t^B)}_{t\geq 0}$ of $B$) that will attain the maximum in
\begin{equation}
\sup_{0\leq\tau\leq T}\rE[f(M_T^\lambda-B_\tau^\lambda)],
\label{eq:BM-stopping-problem}
\end{equation}
where $f:[0,\infty)\to\RR$ is a given (reward) function, and $T>0$ a fixed time horizon. Note that in particular, the choice $f(x)=e^{-\sigma x}$ for a constant $\sigma>0$ yields the problem studied in \cite{SXZ} and in Section 4 of \cite{DuToit}.

Since Brownian motion is the scaling limit of Bernoulli random walk, one might expect the result to be the same as in Theorem \ref{thm:optimal-rules}. This is indeed the case, except that the conditions for uniqueness of the optimal rules are weaker.

\begin{theorem} \label{thm:BM-optimal-rules}
Let $f:[0,\infty)\to\RR$ be non-constant, nonincreasing and convex, and consider the optimal stopping problem \eqref{eq:BM-stopping-problem}.

(i) If $\lambda<0$, the rule $\tau\equiv 0$ is the unique optimal rule.

(ii) If $\lambda>0$, the rule $\tau\equiv T$ is the unique optimal rule.

(iii) If $\lambda=0$, any stopping time $\tau$ satisfying 
\begin{equation}
\rP(B_\tau^\lambda=M_\tau^\lambda\ \mbox{or}\ \tau=T)=1
\label{eq:indifference}
\end{equation}
is optimal. (In particular, the rules $\tau\equiv 0$ and $\tau\equiv T$ are optimal.) If, furthermore, $f$ is not linear, then all optimal rules satisfy \eqref{eq:indifference}.
\end{theorem}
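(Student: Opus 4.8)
The plan is to deduce Theorem~\ref{thm:BM-optimal-rules} from the discrete result, Theorem~\ref{thm:optimal-rules}, by a scaling (weak-convergence) argument, after first recording a Markovian reduction that makes the limit transparent and pins down the relevant values. For a stopping time $\tau$ I would write $M_T^\lambda-B_\tau^\lambda=\max(D_\tau,\tilde M_\tau)$, where $D_\tau:=M_\tau^\lambda-B_\tau^\lambda\ge0$ is the current drawdown and $\tilde M_\tau:=\max_{\tau\le u\le T}(B_u^\lambda-B_\tau^\lambda)$ is the maximal future increment. Since, conditionally on $\FF_\tau$, the process $(B_{\tau+u}^\lambda-B_\tau^\lambda)_{u\ge0}$ is again a Brownian motion with drift $\lambda$ independent of $\FF_\tau$, one obtains $\rE[f(M_T^\lambda-B_\tau^\lambda)\mid\FF_\tau]=g(D_\tau,T-\tau)$ with $g(d,s):=\rE[f(d\vee M_s^\lambda)]$, so that the problem is the optimal stopping of $R_t:=g(D_t,T-t)$. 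The two extreme rules then carry the values $\rE[R_0]=g(0,T)=\rE[f(M_T^\lambda)]$ and $\rE[R_T]=\rE[f(D_T)]=\rE[f(M_T^\lambda-B_T^\lambda)]$, and for $\lambda=0$ the time-reversal identity $M_T^0-B_T^0\stackrel{d}{=}M_T^0$ shows these two values coincide, which is the source of the indifference in part~(iii). I would stress here that $R_t$ is in general \emph{neither} a sub- nor a supermartingale (continuing can strictly beat stopping at intermediate drawdown states), so the statement is a genuine optimal-stopping comparison rather than a one-step inequality; this is exactly why I prefer to import the conclusion from the already-established discrete theorem rather than re-run a verification argument.

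For the optimality half of each claim I would discretize. Fixing the sign of $\lambda$, for each $n$ I take the Bernoulli walk $S^{(n)}$ with parameter $p_n=\tfrac12+\tfrac\lambda2\sqrt{T/n}$ and horizon $N=n$, so that $\mathrm{sign}(p_n-\tfrac12)=\mathrm{sign}(\lambda)$, and form the rescaled, interpolated process $B_t^{(n)}:=\sqrt{T/n}\,S^{(n)}_{\lfloor nt/T\rfloor}$; since the per-step variance $4p_nq_n\to1$ and the drift is calibrated to $\lambda$, Donsker's theorem gives $B^{(n)}\Rightarrow B^\lambda$ in $C[0,T]$. The lattice reward $f_n(k):=f(k\sqrt{T/n})$ is again nonincreasing and convex, so Theorem~\ref{thm:optimal-rules} applies: for $\lambda<0$ the discrete optimum is attained at $\tau\equiv0$, with value $\rE[f_n(M_n^{(n)})]=\rE\bigl[f\bigl(\sqrt{T/n}\,M_n^{(n)}\bigr)\bigr]$, symmetrically at $\tau\equiv N$ for $\lambda>0$, and the driftless case is governed by Theorem~\ref{thm:optimal-rules}(iii). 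Because $\sqrt{T/n}\,M_n^{(n)}=\max_k B^{(n)}_{t_k}\Rightarrow M_T^\lambda$, while $f$ is squeezed between $f(0)$ and an affine lower bound and the running maxima have uniformly Gaussian tails, uniform integrability converts these weak limits into convergence of the values to $\rE[f(M_T^\lambda)]$ and $\rE[f(M_T^\lambda-B_T^\lambda)]$, i.e. to the value of the corresponding extreme continuous rule.

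It then remains to transfer optimality: to show that \emph{no} continuous stopping time does better. Given an arbitrary $\tau$ adapted to $(\FF_t^B)$, I would realize the weak convergence on a common probability space by a Skorokhod coupling and approximate $\tau$ by stopping times $\tau_n$ adapted to the filtration of $S^{(n)}$ (e.g. the first grid time exceeding $\tau$ along the coupled paths), arranging that $f_n\bigl(M_n^{(n)}-S^{(n)}_{\tau_n}\bigr)\to f(M_T^\lambda-B_\tau^\lambda)$ in $L^1$. Each discrete reward is bounded by the discrete optimum via Theorem~\ref{thm:optimal-rules}, and passing to the limit yields $\rE[f(M_T^\lambda-B_\tau^\lambda)]\le\rE[f(M_T^\lambda)]$ for $\lambda<0$ (respectively that the $\tau\equiv T$ value is maximal for $\lambda>0$, and the indifference class is optimal for $\lambda=0$). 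I expect this transfer to be the main obstacle, since it is precisely where adaptedness of the approximating stopping times, path-continuity of the maximum functional, and the $L^1$-control must be combined; the Skorokhod representation together with the exponential tails of the maxima should make it tractable.

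Finally, for uniqueness in (i)--(ii) and the sharp converse in (iii) I would argue directly, exploiting that the one-dimensional marginals of $B^\lambda$ have no atoms. When $\lambda\neq0$ and $f$ is non-constant, any rule other than the prescribed extreme one should be shown strictly suboptimal by localizing to an event of positive probability on which the stopped configuration differs from the optimal one over a region where $f$ is strictly decreasing; the absence of atoms is exactly what removes the boundary ties responsible for the stronger hypothesis (``strictly decreasing'') in the discrete Theorem~\ref{thm:unique}, so that mere non-constancy suffices. For $\lambda=0$, the equality $\rE[R_0]=\rE[R_T]$ exhibits both extremes as optimal, and the converse---that non-linearity of $f$ forces \eqref{eq:indifference}---follows because, when $f$ is strictly convex on some interval, a strict Jensen inequality applied to the independent future increment $\tilde M_\tau$ gives a strict gain from continuing at any state with $D_t>0$ and $t<T$; hence an optimal rule cannot stop in $\{D_t>0,\ t<T\}$ with positive probability, which is exactly the requirement \eqref{eq:indifference}.
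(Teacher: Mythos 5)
Your Markovian reduction and the identification of the two extreme values are sound, and deducing the continuous theorem from Theorem~\ref{thm:optimal-rules} by Donsker scaling is a genuinely different route from the paper, which instead proves a continuous analog of the discrete key lemma (Lemma~\ref{lem:BM-key}) directly from the explicit joint density \eqref{eq:joint-density} and then runs the same conditioning argument as in the discrete case. However, your transfer step has a genuine gap: under a Skorokhod coupling, a stopping time $\tau$ of the Brownian filtration is \emph{not} measurable with respect to the filtration generated by the walk $S^{(n)}$, so ``the first grid time exceeding $\tau$ along the coupled paths'' is not a stopping time for $S^{(n)}$, and Theorem~\ref{thm:optimal-rules} simply does not apply to it. Repairing this requires either stability-of-optimal-stopping machinery under convergence of filtrations (Aldous-type results, heavier than the paper's entire argument), or a Skorokhod \emph{embedding} of the Bernoulli walk into $B^\lambda$ itself at stopping times $T_1<T_2<\cdots$, combined with the observation that the discrete proof remains valid for the enlarged filtration $(\FF_{T_k}^B)_k$, plus control of the random mismatch between $T_{\lfloor n\rfloor}$ and the horizon $T$ and of the gap between the embedded maximum and $M_T^\lambda$. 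None of this is routine, and none of it is supplied; as written, the central optimality transfer fails.

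The uniqueness half has more serious problems, and you correctly sense that strictness cannot be extracted from a weak limit --- but your direct substitutes do not work. For part (iii), the ``strict Jensen inequality applied to the independent future increment $\tilde M_\tau$'' is unfounded: the future maximum and the future terminal increment are \emph{dependent} (conditionally on a large maximum the endpoint has positive mean), so Jensen in the proposed form yields neither the inequality nor its strictness; the genuine comparison is exactly the content of Lemma~\ref{lem:BM-key}, whose strict case under non-linearity requires choosing $n$ with $nx>x_0$ and verifying $\psi(x,nx,nx)>0$ --- there is no shortcut via independence. For $\lambda>0$, uniqueness under mere non-constancy of $f$ rests on the strict pointwise inequality $h(s,b;\lambda)>h(s,b;-\lambda)$ for $b>0$, which the paper obtains from the formula \eqref{eq:joint-density} while explicitly noting that no direct probabilistic argument seems available; your appeal to ``absence of atoms'' does not produce it. For $\lambda<0$, the paper must prove $\rP(0<Z_\tau<x_0)>0$ for every rule with $\rP(\tau>0)>0$, via an auxiliary stopping time $\tau_0$ and the strong Markov property of the drawdown; your ``localize to a positive-probability event where the stopped configuration differs'' is a restatement of what needs proof, not a proof. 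Finally, note that part (iii) asserts optimality of \emph{every} rule satisfying \eqref{eq:indifference}, not just $\tau\equiv 0$ and $\tau\equiv T$: your identity $\rE[R_0]=\rE[R_T]$ covers only the extremes, and pushing the whole indifference class through the discretization inherits the adaptedness problem above --- whereas your own Markovian reduction together with the reflection identity (giving $G(s,0)=D(s,0)$ and $G(0,x)=D(0,x)$) would settle it directly, which is exactly how the paper argues.
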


Note that if $f$ is constant, or if $f$ is linear and $\lambda=0$, then any stopping time is optimal in view of the optional sampling theorem. Thus, the uniqueness conditions in Theorem \ref{thm:BM-optimal-rules} are the best possible. Essentially, the conditions for uniqueness of the optimal rules are weaker than in the discrete case because the increments of Brownian motion can be arbitrarily large, whereas the increments of Bernoulli random walk are bounded.

Finally, we note that by putting $\tilde{f}:=-f$, problems \eqref{eq:objective} and \eqref{eq:BM-stopping-problem} may be formulated equivalently as penalty-minimization problems. For instance, \eqref{eq:BM-stopping-problem} can be represented alternatively in the form
\begin{equation}
\inf_{0\leq\tau\leq T}\sE[\tilde{f}(M_T^\lambda-B_\tau^\lambda)],
\label{eq:minimization-problem}
\end{equation}
where $\tilde{f}:[0,\infty)\to\RR$ is nondecreasing and concave. Thus, the above results apply to a variety of natural penalty functions, including $\tilde{f}(x)=x^\alpha$ where $0<\alpha<1$,  $\tilde{f}(x)=\log(1+x)$, etc. However, without concavity of $\tilde{f}$ the optimal rules are generally of a more intricate form: see, for instance, the solution in \cite[Section 3]{DuToit} of \eqref{eq:minimization-problem} for the function $\tilde{f}(x)=e^{\sigma x}$, where $\sigma>0$.

\bigskip
Theorems \ref{thm:optimal-rules} and \ref{thm:unique} are proved in Section \ref{sec:random-walk}, and Theorem \ref{thm:BM-optimal-rules} is proved in Section \ref{sec:Brownian}. Many of the ideas of the proofs are adapted from \cite{DuToit} and \cite{YYZ}, and some details, in as far as they can be found in these papers, are therefore omitted here. The novel contributions of the present article are the explicit use of the convexity of $f$ (see Lemmas \ref{lem:key-inequality} and \ref{lem:BM-key} below), and the investigation of uniqueness of the optimal stopping times, which requires some finesse in the case of general $f$.

\section{The maximum of Bernoulli random walk} \label{sec:random-walk}

This section is devoted to the proofs of Theorems \ref{thm:optimal-rules} and \ref{thm:unique}. It will be useful to consider an infinite family of random walks, defined on the same probability space. The following construction is standard. Let $U_1,U_2\dots$ be independent random variables, uniformly distributed on the interval $[0,1]$. For $k\in\NN$ and $p\in(0,1)$, define
\begin{equation*}
X_k^p:=\begin{cases}
1, & \mbox{if $U_k\leq p$}\\
-1, & \mbox{if $U_k>p$}.
\end{cases}
\end{equation*}
Define $S_0^p\equiv 0$, and $S_k^p:=X_1^p+\dots+X_k^p$, for $k\geq 1$. Then for each $p\in(0,1)$, ${\{S_k^p\}}_k$ is a Bernoulli random walk with parameter $p$. And if $p\geq p'$, then $X_k^p\geq X_k^{p'}$ for all $k$.

Let $M_k^p:=\max\{S_0^p,\dots,S_k^p\}$, and $Z_k^p:=M_k^p-S_k^p$, for $k=0,1\dots$. Observe that for each $p$, the process ${\{Z_k^p\}}_k$ is Markovian. Moreover, it is easy to see that
\begin{equation}
p\geq p' \Rightarrow Z_k^p\leq Z_k^{p'}\ \mbox{for all $k$}.
\label{eq:ordering}
\end{equation}
Finally, and most importantly, Bernoulli random walk satisfies the well-known reflection property
\begin{equation}
(M_n^p-S_n^p,S_n^p)\stackrel{d}{=}(M_n^q,-S_n^q)
\label{eq:reflection-principle}
\end{equation}
for all $n\in\NN$.
(The easiest way to see this is to observe that the time-reversed process $\tilde{S}_k:=S_{n-k}^p-S_{n}^p$, $k=0,1,\dots,n$ is a Bernoulli random walk with parameter $q$, starting at $0$ and ending at $-S_n^p$, with maximum value $M_n^p-S_n^p$.) In particular (reversing the roles of $p$ and $q$),
\begin{equation}
M_n^p\stackrel{d}{=} M_n^q-S_n^q=Z_n^q.
\label{eq:time-reversal}
\end{equation}
It is almost amusing to see how many times this identity must be used in order to prove Theorem \ref{thm:optimal-rules}.

The following lemma holds the key to the proof of Theorem \ref{thm:optimal-rules}.

\begin{lemma} \label{lem:key-inequality}
Let $f:\{0,1,\dots,N\}\to\RR$ be nonincreasing and convex.

(i) If $p\geq 1/2$, then
\begin{equation}
\sE[f(i\vee M_n^p-S_n^p)]\geq \sE\big[f\big(i\vee (M_n^p-S_n^p)\big)\big]
\label{eq:key-inequality}
\end{equation}
for all $n\leq N$ and all $i\geq 0$. 

(ii) If $p>1/2$ and $f$ is strictly decreasing, then strict inequality holds in \eqref{eq:key-inequality} for all $0<n\leq N$ and all $i>0$.

(iii) If $p\geq 1/2$ and $f$ is strictly convex, then strict inequality holds in \eqref{eq:key-inequality} for all $0<n\leq N$ and all $i>0$.
\end{lemma}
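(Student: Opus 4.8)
The plan is to prove (i) first and then read off the strict versions (ii) and (iii) from the very same computation. The engine is the convexity of $f$, fed in through a two‑point majorization inequality, combined with the reflection/time‑reversal symmetry to control the relevant path probabilities. First I would use the reflection identity \eqref{eq:reflection-principle} to pass to the $q$-walk, where $q=1-p\le 1/2$. Since $(i\vee M_n^p)-S_n^p=\max(i-S_n^p,Z_n^p)$, applying the joint identity $(Z_n^p,S_n^p)\stackrel{d}{=}(M_n^q,-S_n^q)$ turns the left side of \eqref{eq:key-inequality} into $\sE[f(\max(i+S_n^q,M_n^q))]$, while the right side becomes $\sE[f(\max(i,M_n^q))]$. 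Using $M_n^q\ge S_n^q$ and $Z_n^q=M_n^q-S_n^q$, these rewrite as $M_n^q+(i-Z_n^q)^+$ and $M_n^q+(i-M_n^q)^+$, so \eqref{eq:key-inequality} is equivalent to
\[
\sE\big[f\big(M_n^q+(i-Z_n^q)^+\big)\big]\ge \sE\big[f\big(M_n^q+(i-M_n^q)^+\big)\big].
\]
(Throughout I assume, as holds in the application, that all arguments of $f$ remain in $\{0,\dots,N\}$.)

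Next I would expand this as a sum over the values $(m,z)$ of $(M_n^q,Z_n^q)$. Writing $w(m,z):=\rP((M_n^q,Z_n^q)=(m,z))$ and $D(m,z):=f(m+(i-z)^+)-f(m+(i-m)^+)$, the claim is $\sum_{m,z}D(m,z)\,w(m,z)\ge 0$. The diagonal contributes nothing because $D(m,m)=0$, so it suffices to show that each conjugate pair $\{(m,z),(z,m)\}$ with $m>z$ contributes nonnegatively. Two facts drive this. Convexity: setting $u(m,z)=m+(i-z)^+$, one checks $u(m,z)+u(z,m)=u(m,m)+u(z,z)$ and that the outer pair $\{u(m,z),u(z,m)\}$ majorizes the inner pair $\{u(m,m),u(z,z)\}=\{\max(m,i),\max(z,i)\}$; the discrete majorization inequality for convex $f$ then gives $D(m,z)+D(z,m)\ge 0$. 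Weights: the time‑reversal bijection $\tilde S_k=S_{n-k}-S_n$ on $\pm1$ paths sends a path with $(M_n,Z_n)=(m,z)$ to one with $(M_n,Z_n)=(z,m)$, so the (parameter‑free) path counts satisfy $c(m,z)=c(z,m)$; since each such path carries probability $q^{\#\mathrm{up}}p^{\#\mathrm{down}}$ with $\#\mathrm{up}-\#\mathrm{down}=m-z$, this yields $w(m,z)=(q/p)^{m-z}w(z,m)\le w(z,m)$ for $m>z$ (here $q\le p$).

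To finish (i) I would combine these using the signs of $D$: because $x\mapsto(i-x)^+$ is nonincreasing, $D(m,z)\le 0\le D(z,m)$ for $m>z$, hence
\[
w(m,z)D(m,z)+w(z,m)D(z,m)\ge D(m,z)\big(w(m,z)-w(z,m)\big)\ge 0,
\]
the last step because $D(m,z)\le 0$ and $w(m,z)-w(z,m)\le 0$; summing over pairs proves (i). For strictness I would exhibit one positively‑weighted pair that contributes strictly. Taking $z=0$, $m\in\{1,2\}$ matched to the parity of $n$, and any $i\ge 1$: the lower bound $D(m,0)\big(w(m,0)-w(0,m)\big)$ is strictly positive when $p>1/2$ and $f$ is strictly decreasing, giving (ii); and rewriting the pair contribution as $w(m,z)[D(m,z)+D(z,m)]+(w(z,m)-w(m,z))D(z,m)$, the first term is strictly positive when $f$ is strictly convex (the spread $\{i+m,(i-m)^+\}$ versus $\{\max(m,i),i\}$ is nondegenerate), which gives (iii) for every $p\ge 1/2$.

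The hard part will be the asymmetric case $p>1/2$. When $p=1/2$ the pair $(M_n,Z_n)$ is exchangeable, the two conjugate weights are equal, and $D(m,z)+D(z,m)\ge 0$ alone closes the argument. For $p>1/2$ the conjugate weights genuinely differ, and the whole point is that the negative contribution $D(m,z)$ is attached to the \emph{smaller} weight $w(m,z)$ while the positive contribution $D(z,m)$ carries the \emph{larger} weight $w(z,m)$. Getting this bookkeeping right—pinning down the sign of each $D$, the direction of the weight inequality, and verifying that they pull the same way—is the delicate step; once it is in place, everything else follows directly from convexity via majorization.
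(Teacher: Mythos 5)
Your proof is correct and is essentially the paper's own argument in reorganized form: you first pass to the equivalent statement \eqref{eq:alternative-form} via the reflection identity \eqref{eq:reflection-principle} and then pair conjugate points $(m,z)\leftrightarrow(z,m)$ of the $q$-walk via time reversal, whereas the paper splits by the sign of $S_n^p$ and reflects only the negative part; your weight ratio $w(m,z)=(q/p)^{m-z}w(z,m)$ is the paper's single-path comparison $\rP(M_n^p=k,S_n^p=l)\geq\rP(M_n^q=k,S_n^q=l)$, and your majorization inequality $D(m,z)+D(z,m)\geq 0$ is exactly the paper's $\psi(i,k,l)\geq 0$ under the substitution $k=m$, $l=m-z$. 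The strictness arguments for (ii) and (iii) differ only in the choice of witness (you use $z=0$ with $m\in\{1,2\}$ matched to the parity of $n$; the paper uses the all-up path $k=l=n$) and are likewise sound.
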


\begin{proof}
(i) Let $p\geq 1/2$. We begin by writing
\begin{align*}
\rE&\left[f(i\vee M_n^p-S_n^p)-f\big(i\vee (M_n^p-S_n^p)\big)\right]\\
&=\sum_{l\in\ZZ}\sum_{k\geq l}\left[f(i\vee k-l)-f\big(i\vee (k-l)\big)\right]\sP(M_n^p=k,S_n^p=l)\\
&=\sum_{l>0}\sum_{k\geq l}+\sum_{l<0}\sum_{k\geq 0}=:\Sigma^++\Sigma^-.
\end{align*}
(Note that the terms with $l=0$ vanish.) By \eqref{eq:reflection-principle} and the change of variable $k'=k-l$, $l'=-l$, the second summation becomes
\begin{align*}
\Sigma^-&=\sum_{l<0}\sum_{k\geq 0}\left[f(i\vee k-l)-f\big(i\vee(k-l)\big)\right]\sP(M_n^q=k-l, S_n^q=-l)\\
&=\sum_{l'>0}\sum_{k'\geq l'}\left[f\big(i\vee(k'-l')+l'\big)-f(i\vee k')\right]\sP(M_n^q=k', S_n^q=l').
\end{align*}
The key to further progress is that for $l>0$,
\begin{equation*}
\rP(M_n^p=k, S_n^p=l)\geq \sP(M_n^q=k, S_n^q=l). 
\end{equation*}
(This follows easily by considering the probability of a single path ending at $l$ with maximum $k$.) Since $f$ is nonincreasing and $i\vee k-l\leq i\vee(k-l)$, we have
\begin{equation*}
f(i\vee k-l)-f\big(i\vee(k-l)\big)\geq 0,
\end{equation*}
and therefore,
\begin{equation}
\Sigma^+\geq\sum_{l>0}\sum_{k\geq l}\left[f(i\vee k-l)-f\big(i\vee(k-l)\big)\right]\sP(M_n^q=k, S_n^q=l).
\label{eq:domination}
\end{equation}
Combining these results, we conclude that
\begin{equation}
\Sigma^++\Sigma^-\geq\sum_{l>0}\sum_{k\geq l}\psi(i,k,l)\sP(M_n^q=k, S_n^q=l),
\label{eq:final-inequality}
\end{equation}
where
\begin{align*}
\psi(i,k,l):&=\big[f(i\vee k-l)-f\big(i\vee(k-l)\big)\big]+\big[f\big(i\vee(k-l)+l\big)-f(i\vee k)\big]\\
&=[f(i\vee k-l)-f(i\vee k)]-\big[f\big(i\vee(k-l)\big)-f\big(i\vee(k-l)+l\big)\big].
\end{align*}
Since $i\vee k-l\leq i\vee(k-l)$ and $f$ is convex, it is easy to see that $\psi(i,k,l)\geq 0$. This yields \eqref{eq:key-inequality}.

(ii) Suppose $p>1/2$ and $f$ is strictly decreasing. Let $n>0$ and $i>0$, and put $k=l=n$. Then
\begin{equation*}
f(i\vee k-l)-f\big(i\vee(k-l)\big)=f\big((i-n)^+\big)-f(i)>0.
\end{equation*}
Since $\rP(M_n^p=S_n^p=n)>\sP(M_n^q=S_n^q=n)$, strict inequality holds in \eqref{eq:domination}, and hence in \eqref{eq:key-inequality}.

(iii) Finally, suppose $p\geq 1/2$ and $f$ is strictly convex. Let $n>0$ and $i>0$. Since $i\vee n-n=(i-n)^+<i=i\vee(n-n)$, the strict convexity of $f$ implies that $\psi(i,n,n)>0$. This, together with \eqref{eq:final-inequality} and the obvious fact that $\rP(M_n^q=S_n^q=n)>0$, gives strict inequality in \eqref{eq:key-inequality}. 
\end{proof}

\begin{corollary} \label{cor:consequence}
Let $f$ be as in Lemma \ref{lem:key-inequality}. If $p\geq 1/2$, then
\begin{equation}
\sE[f(i\vee M_n^p-S_n^p)]\geq \sE[f(i\vee M_n^p)]
\label{eq:p-inequality}
\end{equation}
for all $n\leq N$ and all $i\geq 0$. Moreover, if $p>1/2$ and $f$ is strictly decreasing, then strict inequality holds in \eqref{eq:p-inequality} for all $0<n\leq N$ and all $i\geq 0$.
\end{corollary}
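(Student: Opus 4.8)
The plan is to deduce \eqref{eq:p-inequality} from Lemma \ref{lem:key-inequality}(i) by inserting one further comparison between the coupled walks $\{S_k^p\}$ and $\{S_k^q\}$. The bridge connecting the right-hand side of \eqref{eq:key-inequality} to the right-hand side of \eqref{eq:p-inequality} is the reflection principle: since $Z_n^p=M_n^p-S_n^p\stackrel{d}{=}M_n^q$ by \eqref{eq:reflection-principle}, the $\vee$ operation is applied to identically distributed random variables, so
\[
\sE[f(i\vee(M_n^p-S_n^p))]=\sE[f(i\vee M_n^q)].
\]
Thus Lemma \ref{lem:key-inequality}(i) may be read as $\sE[f(i\vee M_n^p-S_n^p)]\geq\sE[f(i\vee M_n^q)]$, and it remains only to compare $\sE[f(i\vee M_n^q)]$ with $\sE[f(i\vee M_n^p)]$.

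For that comparison I would invoke the monotone coupling. Since $p\geq 1/2$ forces $p\geq q$, the construction gives $X_k^p\geq X_k^q$, hence $S_k^p\geq S_k^q$ and therefore $M_n^p\geq M_n^q$ pathwise, so that $i\vee M_n^p\geq i\vee M_n^q$. Because $f$ is nonincreasing, $f(i\vee M_n^q)\geq f(i\vee M_n^p)$ pointwise, and taking expectations closes the chain
\[
\sE[f(i\vee M_n^p-S_n^p)]\geq\sE[f(i\vee M_n^q)]\geq\sE[f(i\vee M_n^p)],
\]
which is exactly \eqref{eq:p-inequality}.

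For the strict statement ($p>1/2$, $f$ strictly decreasing, $0<n\leq N$), I would split on $i$, because the two inequalities in the chain are strict in complementary regimes. When $i>0$, Lemma \ref{lem:key-inequality}(ii) already makes the first inequality strict, finishing the case. When $i=0$, part (ii) no longer applies, and I would instead harvest strictness from the monotonicity step by exhibiting a positive-probability event on which $M_n^p>M_n^q$. The event $\{U_k\in(q,p]\ \text{for all}\ k=1,\dots,n\}$ serves: it has probability $(p-q)^n>0$ and forces $S_n^p=M_n^p=n$ while $M_n^q=0$, so there $f(M_n^p)=f(n)<f(0)=f(M_n^q)$; as the reverse inequality holds everywhere, $\sE[f(M_n^q)]>\sE[f(M_n^p)]$ strictly.

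The only delicate point is precisely this endpoint $i=0$. The inequality of Lemma \ref{lem:key-inequality} degenerates there (the hypothesis $i>0$ in part (ii) is genuinely needed, as the proof uses $\psi(i,n,n)$ with $i>0$), so strictness cannot be drawn from the first inequality and must come from the coupling comparison instead. Everything else is routine bookkeeping: verifying that $p\geq 1/2\Rightarrow p\geq q$, that the reflection identity \eqref{eq:reflection-principle} survives the $i\vee$ operation, and that the pathwise coupling $M_n^p\geq M_n^q$ is preserved under $i\vee(\cdot)$.
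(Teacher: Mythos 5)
Your proposal is correct and follows essentially the same route as the paper's own proof: the reflection identity \eqref{eq:reflection-principle} recasts the right side of \eqref{eq:key-inequality} as $\sE[f(i\vee M_n^q)]$, the monotone coupling $M_n^q\leq M_n^p$ with $f$ nonincreasing supplies the second inequality, and strictness is handled exactly as in the paper by invoking Lemma \ref{lem:key-inequality}(ii) for $i>0$ and extracting strictness from the coupling step at $i=0$. Your only addition is to exhibit the explicit event $\{U_k\in(q,p]\ \mbox{for}\ k=1,\dots,n\}$ of probability $(p-q)^n>0$ where the paper merely asserts $\rP(M_n^q<M_n^p)>0$, which is a harmless (and slightly more self-contained) elaboration.
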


\begin{proof}
Let $p\geq 1/2$. Note that in view of \eqref{eq:reflection-principle}, the inequality \eqref{eq:key-inequality} can be stated alternatively as
\begin{equation}
\sE[f(i\vee M_n^p-S_n^p)]\geq \sE[f(i\vee M_n^q)].
\label{eq:alternative-form}
\end{equation}
Since $M_n^q\leq M_n^p$ and $f$ is nonincreasing, we have furthermore
\begin{equation}
\rE[f(i\vee M_n^q)]\geq \sE[f(i\vee M_n^p)].
\label{eq:pq-inequality}
\end{equation}
This, together with \eqref{eq:alternative-form}, gives \eqref{eq:p-inequality}. 

Now suppose $p>1/2$ and $f$ is strictly decreasing. By Lemma \ref{lem:key-inequality}(ii), it suffices to verify strict inequality for $i=0$. But for this value of $i$, \eqref{eq:pq-inequality} holds with strict inequality, since $\rP(M_n^q<M_n^p)>0$ for $n>0$. 
\end{proof}

\begin{proof}[Proof of Theorem \ref{thm:optimal-rules}]
Define the $\sigma$-algebras $\FF_k:=\sigma(\{U_1,\dots,U_k\})$, for $k=0,1,\dots,N$. We prove the slightly stronger statement that, even among stopping rules that can use complete information about the $U_k$'s, the rules given in the statement of the theorem are optimal. Recall that, for a stopping time $\tau$ adapted to $\{\FF_k\}$, the sigma algebra $\FF_\tau$ is defined by the rule
\begin{equation*}
A\in\FF_\tau \Leftrightarrow A\cap\{\tau\leq k\}\in\FF_k\ \mbox{for all $k$}.
\end{equation*}

(i) Consider first the case $p\leq 1/2$. The argument below is adapted from \cite{YYZ}. Let $\tau$ be a stopping time adapted to $\{\FF_k\}$. By conditioning on $\FF_\tau$, we can write
\begin{equation*}
\rE[f(M_N^p-S_\tau^p)]=\sE[G(N-\tau,Z_\tau^p)],
\end{equation*}
where
\begin{equation}
G(k,i):=\sE[f(i\vee M_k^p)].
\label{eq:G}
\end{equation}
Using \eqref{eq:time-reversal} and the stationary and independent increments of the random walk, we obtain similarly
\begin{equation*}
\rE[f(M_N^p)]=\sE[f(Z_N^q)]=\sE[\sE[f(Z_N^q)|\FF_\tau]]
=\sE[D(N-\tau,Z_\tau^q)],
\end{equation*}
where
\begin{equation*}
D(k,i):=\sE[f(i\vee M_k^q-S_k^q)].
\end{equation*}
(See \cite{YYZ}, p.~654 and p.~660 for the details of these calculations in the case $f(k)=e^{-\delta k}$.)
Since $f$ is nonincreasing, $G(k,i)$ is nonincreasing in $i$ for fixed $k$, which by \eqref{eq:ordering} implies that $G(N-\tau,Z_\tau^p)\leq G(N-\tau,Z_\tau^q)$. But by \eqref{eq:alternative-form} with the roles of $p$ and $q$ reversed,
\begin{equation*}
D(k,i)\geq G(k,i)
\end{equation*}
for all $k$ and all $i$. It follows that
\begin{align}
\rE[f(M_N^p-S_\tau^p)]&=\sE[G(N-\tau,Z_\tau^p)]
\leq \sE[G(N-\tau,Z_\tau^q)] \label{eq:G-inequality} \\
&\leq \sE[D(N-\tau,Z_\tau^q)]=\sE[f(M_N^p)],\notag
\end{align}
for any stopping time $\tau$. Thus, the rule $\tau\equiv 0$ is optimal. 

(ii) Assume next that $p\geq 1/2$. Define $G(k,i)$ by \eqref{eq:G}, and let
\begin{equation*}
\tilde{D}(k,i):=\sE[f(i\vee M_k^p-S_k^p)].
\end{equation*}
By Corollary \ref{cor:consequence}, $\tilde{D}(k,i)\geq G(k,i)$, and hence, for any stopping time $\tau$,
\begin{align}
\rE[f(M_N^p-S_\tau^p)]&=\sE[G(N-\tau,Z_\tau^p)]
\leq \sE[\tilde{D}(N-\tau,Z_\tau^p)] \label{eq:GDtilde-inequality}\\
&=\sE[f(Z_N^p)]=\sE[f(M_N^p-S_N^p)]. \notag
\end{align}
Therefore, the rule $\tau\equiv N$ is optimal. 

(iii) Consider finally the case $p=1/2$. Observe that $G(0,i)=D(0,i)=f(i)$ for all $i$, and
$G(k,0)=\sE[f(M_k^p)]=\sE[f(Z_k^q)]=D(k,0)$
for all $k$. Thus, for any stopping time $\tau$ with $S_\tau=M_\tau$ or $\tau=N$ almost surely, 
\begin{equation*}
G(N-\tau,Z_\tau^p)=D(N-\tau,Z_\tau^p)=D(N-\tau,Z_\tau^q)
\end{equation*}
(since $p=q$), and hence, for any such $\tau$,
\begin{equation}
\rE[f(M_N^p-S_\tau^p)]=\sE[f(M_N^p)]=\sup_{\tau'}\sE[f(M_N^p-S_{\tau'})], 
\label{eq:tau-is-optimal}
\end{equation}
where the last equality follows by part (i). 
\end{proof}

\begin{proof}[Proof of Theorem \ref{thm:unique}]
(i) Let $p<1/2$, and suppose $f$ is not constant. Since $f$ is nonincreasing and convex, this implies that $f(0)>f(i)$ for all $i>0$. It follows that $G(k,0)>G(k,i)$ for all $i>0$ and all $k$, since obviously $f(M_k^p)\geq f(i\vee M_k^p)$, and
\begin{equation*}
\rP\left[f(M_k^p)>f(i\vee M_k^p)\right]\geq \sP(M_k^p=0)>0.
\end{equation*}
Now consider a stopping time $\tau$ with $\tau>0$. Then
\begin{align*}
\rP(Z_\tau^q=0,Z_\tau^p>0)&\geq \sP(Z_k^q=0\ \mbox{and $Z_k^p>0$ for $k=1,\dots,N$})\\
&\geq \sP(X_k^q=1 \ \mbox{and $X_k^p=-1$ for $k=1,\dots,N$})\\
&=(q-p)^N>0.
\end{align*}
(Note that this holds for {\em any} random time $\tau$, not just for stopping times.) It therefore follows that $\rE[G(N-\tau,Z_\tau^q)]>\sE[G(N-\tau,Z_\tau^p)]$, which is strict inequality in \eqref{eq:G-inequality}.

(ii) Suppose next that $p>1/2$ and $f$ is strictly decreasing. Then strict inequality holds in Corollary \ref{cor:consequence} for $n>0$ and all $i$. But this yields strict inequality in \eqref{eq:GDtilde-inequality} for any stopping time $\tau$ with $\rP(\tau<N)>0$.

(iii) Finally, assume $p=1/2$, and let $f$ be strictly convex. If $N=1$, the only stopping times are $\tau\equiv 0$ and $\tau\equiv 1$, which both satisfy the condition in Theorem \ref{thm:optimal-rules}(iii). So assume $N\geq 2$. By Lemma \ref{lem:key-inequality}(iii), strict inequality holds in \eqref{eq:key-inequality} for all $i>0$. Thus, if $\tau$ is a stopping time with the property that $\rP(M_\tau^p-S_\tau^p>0\ \mbox{and}\ \tau<N)>0$, then
\begin{equation*}
\rE[D(N-\tau,Z_\tau^p)]>\sE[G(N-\tau,Z_\tau^p)], 
\end{equation*}
and so the first equality in \eqref{eq:tau-is-optimal} is replaced with ``$<$". 
\end{proof}

\section{The maximum of Brownian motion} \label{sec:Brownian}

The key to the proof of Theorem \ref{thm:BM-optimal-rules} is the following analog of Lemma \ref{lem:key-inequality}. It makes use of the well-known fact, analogous to \eqref{eq:reflection-principle}, that
\begin{equation}
(M_t^\lambda-B_t^\lambda,B_t^\lambda)\stackrel{d}{=}(M_t^{-\lambda},-B_t^{-\lambda}).
\label{eq:BM-reflection-principle}
\end{equation}

\begin{lemma} \label{lem:BM-key}
Let $f:[0,\infty)\to\RR$ be nonincreasing and convex.

(i) If $\lambda\geq 0$, then
\begin{equation}
\rE\left[f(x\vee M_t^\lambda-B_t^\lambda)\right]\geq
\sE\left[f\big(x\vee (M_t^\lambda-B_t^\lambda)\big)\right]
\label{eq:BM-key-inequality}
\end{equation}
for all $t\geq 0$ and all $x\geq 0$.

(ii) If $\lambda>0$ and $f$ is not constant, then strict inequality holds in \eqref{eq:BM-key-inequality} for all $t>0$ and all $x>0$.

(iii) If $\lambda=0$ and $f$ is not linear, then strict inequality holds in \eqref{eq:BM-key-inequality} for all $t>0$ and all $x>0$.
\end{lemma}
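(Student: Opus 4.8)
The plan is to prove Lemma~\ref{lem:BM-key} as the continuous analog of Lemma~\ref{lem:key-inequality}, following the same decomposition strategy but replacing sums over lattice points with integrals against the joint law of $(M_t^\lambda, B_t^\lambda)$. For part (i), with $\lambda\geq 0$, I would write the difference of the two expectations as an integral over the region $\{(k,l): k\geq 0, k\geq l\}$ (the support of $(M_t^\lambda, B_t^\lambda)$, since $M_t^\lambda\geq 0$ and $M_t^\lambda\geq B_t^\lambda$), namely
\begin{equation*}
\rE\left[f(x\vee M_t^\lambda - B_t^\lambda) - f\big(x\vee(M_t^\lambda - B_t^\lambda)\big)\right]
= \int\int \Big[f(x\vee k - l) - f\big(x\vee(k-l)\big)\Big]\,\mu(dk,dl),
\end{equation*}
where $\mu$ is that joint law, and then split the domain according to the sign of $l$ into $\Sigma^+$ (over $l>0$) and $\Sigma^-$ (over $l<0$), the contribution from $l=0$ being null.

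The engine of the argument is exactly as in the discrete case. On $\Sigma^-$ I would apply the reflection principle \eqref{eq:BM-reflection-principle} together with the change of variables $k'=k-l$, $l'=-l$ to rewrite it in terms of the law of $(M_t^{-\lambda}, B_t^{-\lambda})$, producing integrand $f\big(x\vee(k'-l')+l'\big) - f(x\vee k')$. On $\Sigma^+$ I would use the density-domination fact that for $\lambda\geq 0$ the joint density of $(M_t^\lambda, B_t^\lambda)$ dominates that of $(M_t^{-\lambda}, B_t^{-\lambda})$ pointwise on $\{l>0\}$ — the continuous counterpart of the path-probability comparison $\rP(M_n^p=k,S_n^p=l)\geq\rP(M_n^q=k,S_n^q=l)$ — so that, since $f$ is nonincreasing and $x\vee k - l\leq x\vee(k-l)$ makes each bracket nonnegative, $\Sigma^+$ is bounded below by the same integral taken against the $(-\lambda)$ density. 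Combining the two pieces reduces everything to showing that
\begin{equation*}
\psi(x,k,l):=\big[f(x\vee k-l)-f(x\vee k)\big]-\big[f\big(x\vee(k-l)\big)-f\big(x\vee(k-l)+l\big)\big]\geq 0
\end{equation*}
for $l>0$, and this follows from convexity of $f$ exactly as before: the two increments are over intervals of equal length $l$, the left one starting at the smaller point $x\vee k - l\leq x\vee(k-l)$, so the nonincreasing-slope property of a convex function gives the inequality.

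The explicit joint density is classically available: for $\lambda\geq 0$ one has, for $k\geq 0$ and $l\leq k$,
\begin{equation*}
\rP(M_t^\lambda\in dk,\,B_t^\lambda\in dl)
= \frac{2(2k-l)}{\sqrt{2\pi t^3}}\exp\left(\lambda l-\frac{\lambda^2 t}{2}-\frac{(2k-l)^2}{2t}\right)\,dk\,dl,
\end{equation*}
from which the domination on $\{l>0\}$ is read off immediately, since increasing $\lambda$ to $-\lambda\mapsto\lambda$ multiplies the density by $e^{2\lambda l}\geq 1$ there (after accounting for the common normalization); I would state this comparison and defer the elementary verification. For the strict statements (ii) and (iii) I would exploit that, unlike the lattice case, the law of $(M_t^\lambda,B_t^\lambda)$ has full support on $\{k\geq 0, l\leq k\}$, so strictness need not be chased to a single boundary atom. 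In (ii), for $\lambda>0$ the domination is strict on a set of positive measure within $\{l>0\}$ and each bracket $f(x\vee k-l)-f\big(x\vee(k-l)\big)$ is strictly positive there whenever $x>0$ (using that $f$ nonincreasing and nonconstant forces strict decrease somewhere, combined with full support), giving strict inequality in the analog of \eqref{eq:domination}. In (iii), for $\lambda=0$ the two densities coincide and strictness must come entirely from $\psi$; since $f$ is convex but not linear, $\psi(x,k,l)>0$ on a set of positive $\mu$-measure with $l>0$, again by full support. The main obstacle I anticipate is making the strictness arguments in (ii) and (iii) clean: I must identify a positive-measure region on which the relevant integrand is genuinely positive and argue this from ``nonconstant'' respectively ``nonlinear'' together with convexity and monotonicity, rather than from a convenient single point as the discrete proof could use.
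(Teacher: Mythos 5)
Your part (i) is correct and is, in substance, identical to the paper's proof: the same split of the integral over $\{l>0\}$ and $\{l<0\}$, the same use of the reflection identity \eqref{eq:BM-reflection-principle} with the change of variables $k'=k-l$, $l'=-l$, the same pointwise density domination $h(s,b;\lambda)\geq h(s,b;-\lambda)$ for $b>0$ read off from \eqref{eq:joint-density} (ratio $e^{2\lambda b}$), and the same reduction to $\psi\geq 0$ via equal-length increments of a convex function. So far, no complaints.

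The genuine gap is in (ii) and (iii), which is exactly where the lemma's content beyond the routine transcription of Lemma \ref{lem:key-inequality} lies, and you have deferred precisely the steps that need an argument. In (ii), the claim that ``each bracket $f(x\vee k-l)-f\bigl(x\vee(k-l)\bigr)$ is strictly positive there whenever $x>0$'' is false as stated: the bracket vanishes identically on $\{k-l\geq x\}$ (both arguments then equal $k-l$), and also wherever $f$ is locally constant --- a nonconstant nonincreasing convex $f$ is strictly decreasing only on an initial interval and may be constant thereafter. What you must do, and what the paper does, is exhibit one concrete positive-measure region where the bracket is positive: on the square $\{x-\delta<b<x<s<x+\delta\}$ with $2\delta<x$ and $f(2\delta)>f(x)$ (such $\delta$ exists because nonconstancy plus convexity forces strict decrease near $0$), the bracket equals $f(s-b)-f(x)\geq f(2\delta)-f(x)>0$, and strict density domination there gives strict inequality in \eqref{eq:positive-part}. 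In (iii) your appeal to ``full support'' cannot work even in principle: at $\lambda=0$ the densities coincide, so strictness must come from $\psi$ alone, and $\psi(x,\cdot,\cdot)$ can vanish identically on arbitrarily large bounded regions --- if $f$ is linear on $[0,A]$ with its nonlinearity only beyond $A$, then $\psi(x,s,b)=0$ whenever $x\vee(s-b)+b\leq A$, so no neighborhood of any conveniently chosen point suffices. The missing idea is that, with $x>0$ fixed first, one must travel arbitrarily far out to detect the nonlinearity: the paper picks $x_0$ with $f(0)-f(u)>f(y)-f(y+u)$ for all $y>x_0$ and $u>0$ (possible since $f$ is convex and not linear), takes $s=b=nx$ with $nx>x_0$ so that $\psi(x,nx,nx)=f(0)-f(x)+f\bigl((n+1)x\bigr)-f(nx)>0$, and then uses continuity of $\psi$ plus positivity of the density in \eqref{eq:BM-together}. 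Without this (or an equivalent) construction, your proofs of (ii) and (iii) are not proofs but restatements of what is to be shown.
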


\begin{proof}
(i) The inequality is trivial when $t=0$, so assume $t>0$.
Let $h(s,b;\lambda)$ be the joint density function of $(M_t^\lambda,B_t^\lambda)$.
Note that in view of \eqref{eq:BM-reflection-principle}, or by \eqref{eq:joint-density} below,
\begin{equation}
h(s,b;\lambda)=h(s-b,-b;-\lambda).
\label{eq:density-relationship}
\end{equation}
As in the proof of Lemma \ref{lem:key-inequality}, we begin by writing
\begin{align*}
\rE&\left[f(x\vee M_t^\lambda-B_t^\lambda)-f\big(x\vee(M_t^\lambda-B_t^\lambda)\big)\right]\\
&=\int_{b\in\RR}\int_{s>b}
\big[f(x\vee s-b)-f\big(x\vee(s-b)\big)\big]h(s,b;\lambda)\,ds\,db\\
&=\int_{b>0}\int_{s>b}+\int_{b<0}\int_{s>0}=:I^++I^-.
\end{align*}
Using \eqref{eq:density-relationship} and the change of variable $z=s-b,\ b'=-b$, we can write $I^-$ as
\begin{align*}
I^-&=\int_{b'>0}\int_{z>b'}
\big[f\big(x\vee(z-b')+b'\big)-f(x\vee z)\big]h(z,b';-\lambda)\,dz\,db'\\
&=\int_{b>0}\int_{s>b}\big[f\big(x\vee(s-b)+b\big)-f(x\vee s)\big]h(s,b;-\lambda)\,ds\,db,
\end{align*}
where the last equality follows simply by renaming the variables. Recall (see, e.g., equation (3.2) of \cite{DuToit}) that for fixed $t$, $h(s,b;\lambda)$ is given by the formula
\begin{equation}
h(s,b;\lambda)=\sqrt{\frac{2}{\pi}}\frac{2s-b}{t^{3/2}}e^{-(2s-b)^2/2t}e^{\lambda(b-\lambda t/2)}
\label{eq:joint-density}
\end{equation}
for all $s\geq 0$ and $b\leq s$. It follows that for all $b>0$ and $s\geq b$,
\begin{equation*}
h(s,b;\lambda)\geq h(s,b;-\lambda),
\end{equation*}
with strict inequality if $\lambda>0$. (Note that there does not seem to be a direct probabilistic argument for this last inequality; instead, we must rely on the specific form of the density formula \eqref{eq:joint-density}.)
Since $f$ is nonincreasing and $x\vee s-b\leq x\vee(s-b)$ for $b>0$, we have
\begin{equation*}
f(x\vee s-b)-f\big(x\vee(s-b)\big)\geq 0, \qquad \mbox{for $b>0$}.
\end{equation*}
Thus,
\begin{equation}
I^+\geq\int_{b>0}\int_{s>b}
\big[f(x\vee s-b)-f\big(x\vee(s-b)\big)\big]h(s,b;-\lambda)\,ds\,db.
\label{eq:positive-part}
\end{equation}
Putting these results together, we conclude that
\begin{equation}
I^++I^-\geq\int_{b>0}\int_{s>b}\psi(x,s,b)h(s,b;-\lambda)\,ds\,db,
\label{eq:BM-together}
\end{equation}
where
\begin{equation*}
\psi(x,s,b):=f(x\vee s-b)-f\big(x\vee(s-b)\big)+f\big(x\vee(s-b)+b\big)-f(x\vee s).
\end{equation*}
As in the proof of Lemma \ref{lem:key-inequality}, the convexity of $f$ implies $\psi(x,s,b)\geq 0$. Thus, the proof of \eqref{eq:BM-key-inequality} is complete.

(ii) Suppose now that $\lambda>0$ and $f$ is not constant. Fix $x>0$. Since $f$ is nonincreasing and convex, we can choose $\delta>0$ so small that $2\delta<x$, and $f(2\delta)>f(x)$. But then, on the small square $x-\delta<b<x<s<x+\delta$, we have
\begin{equation*}
f(x\vee s-b)-f\big(x\vee(s-b)\big)=f(s-b)-f(x)\geq f(2\delta)-f(x)>0.
\end{equation*}
Since $h(s,b;\lambda)>h(s,b;-\lambda)$ on this small square, strict inequality results in \eqref{eq:positive-part}, and hence in \eqref{eq:BM-key-inequality}.

(iii) Suppose finally that $\lambda=0$ and $f$ is not linear. Then there exists a point $x_0>0$ such that for all $x>x_0$ and all $u>0$, $f(0)-f(u)>f(x)-f(x+u)$. Choose $n\in\NN$ such that $nx>x_0$. Then for $s=b=nx$, $\psi(x,s,b)=f(0)-f(x)+f\big((n+1)x\big)-f(nx)>0$. By continuity of $\psi$, it follows that $\psi>0$ on a small square of positive $h(s,b;-\lambda)$-density. Putting this back in \eqref{eq:BM-together} gives strict inequality in \eqref{eq:BM-key-inequality}. 
\end{proof}

\begin{corollary} \label{cor:BM-consequence}
Let $f$ be as in Lemma \ref{lem:BM-key}. If $\lambda\geq 0$, then
\begin{equation}
\sE[f(x\vee M_t^\lambda-B_t^\lambda)]\geq \sE[f(x\vee M_t^\lambda)]
\label{eq:lambda-inequality}
\end{equation}
for all $t\geq 0$ and all $x\geq 0$. Moreover, if $\lambda>0$ and $f$ is not constant, then strict inequality holds in \eqref{eq:lambda-inequality} for all $t>0$ and all $x\geq 0$.
\end{corollary}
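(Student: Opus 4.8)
The plan is to imitate the proof of Corollary \ref{cor:consequence} line for line, replacing Lemma \ref{lem:key-inequality} by Lemma \ref{lem:BM-key} and the random-walk reflection identity by its Brownian analogue \eqref{eq:BM-reflection-principle}.

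First I would restate Lemma \ref{lem:BM-key}(i) in an equivalent form. Since \eqref{eq:BM-reflection-principle} yields $M_t^\lambda-B_t^\lambda\stackrel{d}{=}M_t^{-\lambda}$, the right-hand side of \eqref{eq:BM-key-inequality} equals $\sE[f(x\vee M_t^{-\lambda})]$, so Lemma \ref{lem:BM-key}(i) becomes
\[
\sE[f(x\vee M_t^\lambda-B_t^\lambda)]\ge \sE[f(x\vee M_t^{-\lambda})].
\]
Next I would use the natural coupling $B_s^{\pm\lambda}=B_s\pm\lambda s$ on a single Brownian motion $B$: for $\lambda\ge0$ one has $B_s^{-\lambda}\le B_s^\lambda$ for all $s\ge0$, whence $M_t^{-\lambda}\le M_t^\lambda$ pathwise. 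Because $f$ is nonincreasing, this gives $\sE[f(x\vee M_t^{-\lambda})]\ge\sE[f(x\vee M_t^\lambda)]$, and concatenating the two inequalities proves \eqref{eq:lambda-inequality}.

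For the strict statement ($\lambda>0$ and $f$ nonconstant) I would treat $x>0$ and $x=0$ separately. When $x>0$, Lemma \ref{lem:BM-key}(ii) already makes the first of the two displayed inequalities strict, so \eqref{eq:lambda-inequality} is strict. When $x=0$, Lemma \ref{lem:BM-key}(ii) says nothing, so I would instead force strictness into the monotonicity step $\sE[f(M_t^{-\lambda})]\ge\sE[f(M_t^\lambda)]$.

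I expect this last point to be the main obstacle, precisely because $f$ is assumed only nonconstant rather than strictly decreasing, so the gap $M_t^{-\lambda}<M_t^\lambda$ must be shown to fall, with positive probability, inside a region where $f$ actually decreases. The resolution I have in mind: a nonincreasing convex $f$ that is not constant is strictly decreasing on some initial interval $[0,c)$ with $c>0$. For $t>0$ and $\lambda>0$ the strict ordering $M_t^{-\lambda}<M_t^\lambda$ in fact holds almost surely, since $M_t^\lambda>0$ a.s. and the maximum of $B^\lambda$ cannot be matched by $B^{-\lambda}$ (if $B_{s'}-\lambda s'=M_t^\lambda$ for some $s'>0$, then $B_{s'}+\lambda s'>M_t^\lambda$, a contradiction, while $s'=0$ gives value $0<M_t^\lambda$). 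Since $\rP(M_t^{-\lambda}<c)>0$, on the positive-probability event $\{M_t^{-\lambda}<c\}$ the continuity and strict monotonicity of $f$ on $[0,c)$ force $f(M_t^{-\lambda})>f(M_t^\lambda)$, giving the required strict inequality at $x=0$ and completing the argument.
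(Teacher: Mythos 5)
Your proposal is correct and follows essentially the same route as the paper's proof: rewrite Lemma \ref{lem:BM-key}(i) via \eqref{eq:BM-reflection-principle} as $\sE[f(x\vee M_t^\lambda-B_t^\lambda)]\geq\sE[f(x\vee M_t^{-\lambda})]$, chain it with the coupling inequality $M_t^{-\lambda}\leq M_t^\lambda$, reduce the strict case to $x=0$ via Lemma \ref{lem:BM-key}(ii), and exploit strict decrease of $f$ on an initial interval together with a positive-probability event where $M_t^{-\lambda}<M_t^\lambda$ falls in that interval. Your only departure is cosmetic: you prove the pathwise strict ordering $M_t^{-\lambda}<M_t^\lambda$ almost surely by a deterministic argument, where the paper simply asserts $\rP(M_t^{-\lambda}<M_t^\lambda<x_0)>0$.
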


\begin{proof}
Let $\lambda\geq 0$. In view of \eqref{eq:BM-reflection-principle}, the inequality \eqref{eq:BM-key-inequality} is equivalent to
\begin{equation}
\sE[f(x\vee M_t^\lambda-B_t^\lambda)]\geq \sE[f(x\vee M_t^{-\lambda})].
\label{eq:BM-alternative-form}
\end{equation}
(Note that \eqref{eq:BM-alternative-form} generalizes the key inequality (4.28) in \cite{DuToit}.)
Since $M_t^{-\lambda}\leq M_t^\lambda$ and $f$ is nonincreasing, we have
\begin{equation}
\rE[f(x\vee M_t^{-\lambda})]\geq \sE[f(x\vee M_t^\lambda)].
\label{eq:pm-lambda-inequality}
\end{equation}
This, together with \eqref{eq:BM-alternative-form}, gives \eqref{eq:lambda-inequality}. 

Now suppose that $\lambda>0$ and $f$ is not constant. By Lemma \ref{lem:BM-key}(ii), it suffices to verify strict inequality for $x=0$.
Since $f$ is nonincreasing and convex, there exists $x_0>0$ such that $f$ is strictly decreasing on $[0,x_0]$. Clearly, $\rP(M_t^{-\lambda}<M_t^\lambda<x_0)>0$ for $t>0$. As a result, strict inequality holds in \eqref{eq:pm-lambda-inequality} for $x=0$.
\end{proof}

\begin{proof}[Proof of Theorem \ref{thm:BM-optimal-rules}]
a) {\em Optimality.} We first prove that the rules given in the statement of the theorem are optimal. Let
\begin{equation*}
Z_t^\lambda:=M_t^\lambda-B_t^\lambda, \qquad t\geq 0,
\end{equation*}
and note that for fixed $t$, $Z_t^\lambda$ is pointwise nonincreasing in $\lambda$.

(i) Assume first that $\lambda\leq 0$. Define the functions
\begin{equation*}
G(t,x):=\sE[f(x\vee M_t^\lambda)], \qquad D(t,x):=\sE[f(x\vee M_t^{-\lambda}-B_t^{-\lambda})].
\end{equation*}
Let $\tau\leq T$ be any stopping time adapted to the filtration $(\FF_t^B)$. As in the proof of Theorem \ref{thm:optimal-rules}, we have
\begin{equation*}
\rE[f(M_T^\lambda-B_\tau^\lambda)]=\sE[G(T-\tau,Z_\tau^\lambda)].
\end{equation*}
Using \eqref{eq:BM-reflection-principle}, the stationary and independent increments of Brownian motion and the strong Markov property of the process $(Z_t)$, we obtain
\begin{equation*}
\rE[f(M_T^\lambda)]=\sE[f(Z_T^{-\lambda})]=\sE[D(T-\tau,Z_\tau^{-\lambda})].
\end{equation*}
(For the details of these calculations, see \cite{DuToit}, p.~987 and p.~1004.)
Since $f$ is nonincreasing, $G(t,x)$ is nonincreasing in $x$ for fixed $t$. It follows that $G(T-\tau,Z_\tau^\lambda)\leq G(T-\tau,Z_\tau^{-\lambda})$. Furthermore, \eqref{eq:BM-alternative-form} with $\lambda$ replaced by $-\lambda$ gives $D(t,x)\geq G(t,x)$, for all $t$ and all $x$. As a result,
\begin{align}
\rE[f(M_T^\lambda-B_\tau^\lambda)]&=\sE[G(T-\tau,Z_\tau^\lambda)]
\leq \sE[G(T-\tau,Z_\tau^{-\lambda})] \label{eq:BM-GG-inequality}\\
&\leq \sE[D(T-\tau,Z_\tau^{-\lambda})]=\sE[f(M_T^\lambda)].\label{eq:BM-GD-inequality}
\end{align}
Since this holds for any stopping time $\tau$, it follows that the rule $\tau\equiv 0$ is optimal.

(ii) Consider next the case $\lambda\geq 0$. Let
\begin{equation*}
\tilde{D}(t,x):=\sE[f(x\vee M_t^\lambda-B_t^\lambda)].
\end{equation*}
Then Corollary \ref{cor:BM-consequence} implies that $\tilde{D}(t,x)\geq G(t,x)$, and hence,
\begin{align}
\sE[f(M_T^\lambda-B_\tau^\lambda)]&=\sE[G(T-\tau,Z_\tau^\lambda)]
\leq \sE[\tilde{D}(T-\tau,Z_\tau^{\lambda})]
\label{eq:BM-GDtilde-inequality}\\
&=\sE[f(Z_T^\lambda)]=\sE[f(M_T^\lambda-B_T^\lambda)]\notag
\end{align}
for any stopping time $\tau$. Thus, the rule $\tau\equiv T$ is optimal.

(iii) Suppose finally that $\lambda=0$. Then $G(0,x)=D(0,x)$ for all $x$, and $G(t,0)=D(t,0)$ for all $t$. Thus, for any stopping time $\tau$ satisfying \eqref{eq:indifference},
\begin{equation*}
G(T-\tau,Z_\tau^\lambda)=D(T-\tau,Z_\tau^\lambda)
=D(T-\tau,Z_\tau^{-\lambda}),
\end{equation*}
so that (see \eqref{eq:BM-GG-inequality} and \eqref{eq:BM-GD-inequality})
\begin{equation}
\sE[f(M_T^\lambda-B_\tau^\lambda)]=\sE[f(M_T^\lambda)].
\label{eq:BM-tau-is-optimal}
\end{equation}
By part (i) of the theorem, this implies that $\tau$ is optimal.

\bigskip
b) {\em Uniqueness.} We next verify the uniqueness claims in Theorem \ref{thm:BM-optimal-rules}.

(i) Assume first that $\lambda<0$. While Lemma \ref{lem:BM-key} provides strict inequality in \eqref{eq:BM-GD-inequality} for the majority of stopping times, it does not do so for stopping times $\tau$ of the form \eqref{eq:indifference}. Therefore, we establish strict inequality in \eqref{eq:BM-GG-inequality} instead. First, since $f$ is non-constant, nonincreasing and convex, there exists a point $x_0>0$ such that $f$ is strictly decreasing on $[0,x_0]$. It is easy to see that the same is then true for $G(t,\cdot)$ for any fixed $t$, including $t=0$. Let $\tau\leq T$ be a stopping time with $\rP(\tau>0)>0$. We show first that
\begin{equation}
\rP(0<Z_\tau<x_0)>0,
\label{eq:non-absorption}
\end{equation}
where we write $Z_t$ for $Z_t^{\lambda}$. Choose $t_0>0$ so that $\rP(\tau>t_0)>0$, and let
\begin{equation*}
\tau_0:=\min\{t_0,\tau(x_0/2)\},
\end{equation*}
where $\tau(x):=\inf\{t>0:Z_t\geq x\}$ for $x>0$. Then $\tau_0$ is a stopping time adapted to $(\FF_t^B)$, and so $\{\tau>\tau_0\}\in\FF_{\tau_0}^B$. Moreover, $\rP(\tau>\tau_0)\geq\sP(\tau>t_0)>0$, and $\rP(Z_{\tau_0}>0)=\sP(Z_{t_0}>0)=1$. Thus, the set $\{\tau>\tau_0, Z_{\tau_0}>0\}$ lies in $\FF_{\tau_0}^B$ and has positive probability. On this set,
\begin{equation*}
\rP\left(0<Z_\tau<x_0\big|\,\FF_{\tau_0}^B\right)\geq \sP\left(0<Z_t<x_0\ \mbox{for}\ \tau_0\leq t\leq T\big|\,\FF_{\tau_0}^B\right)>0,
\end{equation*}
by the strong Markov property of $(Z_t)$ and the fact that $(Z_t)$ behaves like Brownian motion with drift as long as it does not hit $0$. But then
\begin{equation*}
\rP(0<Z_\tau<x_0)=\sE\left[\rP\left(0<Z_\tau<x_0\big|\,\FF_{\tau_0}^B\right)\right]>0,
\end{equation*}
proving \eqref{eq:non-absorption}.

Next, a moment of reflection shows that $Z_t^\lambda=Z_t^{-\lambda}$ if and only if $Z_t^{\lambda}=0$. Thus, by \eqref{eq:non-absorption},
\begin{equation*}
\rP(Z_\tau^{-\lambda}<Z_\tau^{\lambda}<x_0)=\sP(0<Z_\tau^{\lambda}<x_0)>0.
\end{equation*}
Along with the fact that $G(t,\cdot)$ is strictly decreasing on $[0,x_0]$ for all $t\geq 0$, this yields strict inequality in \eqref{eq:BM-GG-inequality}.

(ii) Consider next the case $\lambda>0$. Then strict inequality holds in Corollary~\ref{cor:BM-consequence} for $t>0$ and all $x$. But this yields strict inequality in \eqref{eq:BM-GDtilde-inequality} above for any stopping time $\tau$ with $\rP(\tau<T)>0$.

(iii) Assume finally that $\lambda=0$, and $f$ is not linear. By Lemma \ref{lem:BM-key}(iii), strict inequality holds in \eqref{eq:BM-key-inequality} for all $t>0$ and all $x>0$. Thus, for any stopping rule $\tau$ such that $\rP(M_\tau^\lambda-B_\tau^\lambda>0\ \mbox{and}\ \tau<T)>0$,
\begin{equation*}
\rE[D(T-\tau,Z_\tau^\lambda)]>\sE[G(T-\tau,Z_\tau^\lambda)], 
\end{equation*}
and so the equality in \eqref{eq:BM-tau-is-optimal} is replaced with ``$<$". 
\end{proof}

\section*{Acknowledgements}
This work was prepared while the author was on sabbatical in Kyoto, Japan. The author wishes to thank the Kyoto University Mathematics Department and the Research Institute for Mathematical Sciences (RIMS) for their warm hospitality during 2009.

\footnotesize

\end{document}